\newcommand{\Aut}[1]{\mathrm{Aut}\left(#1\right)}     
\newcommand{\Inn}[1]{\mathrm{Inn}\left(#1\right)}     
\newcommand{\Out}[1]{\mathrm{Out}\left(#1\right)}     
\newcommand{\lto}{\longrightarrow}
\newcommand{\dm}{\mathrm{dim}}                 
\newcommand{\rk}{\mathrm{rk}}                  
\newtheorem{thm}{Theorem}
\newtheorem{pro}{Proposition}
\newtheorem{lem}[pro]{Lemma}
\begin{document}
\title[The Solvability of Groups with nilpotent minimal coverings]{The Solvability 
of Groups\\ with nilpotent minimal coverings}

\author{Russell D. Blyth}
   \address{Department of Mathematics and Computer Science, Saint Louis
            University\\220 N. Grand Blvd.\\St. Louis, MO 63103, U.S.A.}
   \email[R.~D.~Blyth]{blythrd@slu.edu}
\author{Francesco Fumagalli}
   \address{Dipartimento di Matematica e Informatica ``Ulisse Dini'', Universit\`a di
            Firenze\\ Viale Morgagni, 67/A, 50134 Firenze, Italy}
   \email[F.~Fumagalli]{fumagalli@math.unifi.it}    
\author{Marta Morigi}
   \address{Dipartimento di Matematica, Universit\`a di Bologna\\ Piazza
            di Porta San Donato 5, 40126 Bologna, Italy}
   \email[M.~Morigi]{marta.morigi@unibo.it}

\maketitle
\begin{center} {\it In memory of L\'{a}szl\'{o} Kov\'{a}cs}
\end{center}

\begin{abstract}
A \emph{covering} of a group is a finite set of proper subgroups whose union 
is the whole group. A covering is \emph{minimal} if there is no covering of 
smaller cardinality, and it is \emph{nilpotent} if all its members are nilpotent 
subgroups. We complete a proof that every group that has a nilpotent minimal covering 
is solvable, starting from the previously known result that a minimal counterexample 
is an almost simple finite group.
\end{abstract}
\vspace{0.5cm}

\noindent {\it 2010 Mathematics subject classification: }{20D99, 20E32, 20D15. }

\noindent {\it Keywords and phrases:} {finite group, cover of a group, 
almost simple group.}

\section{Introduction}

A \emph{covering} (or \emph{cover}) for a group $G$ is a finite collection of proper 
subgroups whose union is all of $G$. A \emph{minimal covering} for $G$ is a covering 
which has minimal cardinality among all the coverings of $G$. The size of a minimal 
covering of a group $G$ is denoted $\sigma(G)$  and is called the 
\emph{covering number} of $G$. 
Since the first half of the last century a lot of attention has been given to 
determining which numbers can occur as covering numbers for groups, and, when 
possible, to characterize groups having the same value of $\sigma(G)$. The earlier 
works date back to G.~Scorza (\cite{Scorza1926}) and D.~Greco (\cite{Greco1951}, 
\cite{Greco1953}, \cite{Greco1956}). The terminology ``minimal covering'' appears in 
the celebrated paper \cite{Tom1997} of M.~J.~Tomkinson. Also worth mentioning are 
\cite{HR1959}, \cite{BBM1970} and \cite{Cohn1994}. 
More recent works determine bounds, and also exact values of $\sigma(G)$, for various 
classes of finite groups (see for instance \cite{BFS1999}, \cite{Lucido2003}, 
\cite{Maroti2005}, \cite{Holmes2006} and \cite{HolmesMaroti2010}). 

Here we are interested in minimal coverings of groups by proper subgroups 
with restricted properties. For example, in \cite[Theorem 2]{BS2001} R.~Bryce 
and L.~Serena show that a group that has a minimal covering consisting of 
abelian subgroups is solvable of very restricted structure. In \cite{BS2008} 
the same authors treat the case of groups that admit a minimal covering with 
all members nilpotent, that is, a {\sl nilpotent minimal covering}. 
They state the following: \\

\noindent {\bf Conjecture.}\label{BS_Conj} {\it Only solvable groups can admit a 
nilpotent minimal covering.}\\

Their main result (\cite[Proposition 2.1]{BS2008}) is  a
reduction to the almost simple case, namely if there is an insolvable group with a 
nilpotent minimal covering then there is a finite almost simple such group. Bryce and 
Serena also show that several classes of finite almost simple groups (among them the 
alternating and symmetric groups, the projective special/general linear groups, the 
Suzuki groups, and the 26 sporadic groups) do not have nilpotent minimal coverings.

Our main result is the following. 

\begin{thm}\label{goal} No finite almost simple group has a nilpotent minimal 
covering.
\end{thm}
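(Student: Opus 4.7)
The plan is to invoke the classification of finite simple groups and handle the almost simple groups not already ruled out by Bryce and Serena. Since they have treated the alternating and symmetric groups, the groups with socle $\mathrm{PSL}_n$ up to $\mathrm{P}\Gamma\mathrm{L}_n$, the Suzuki groups, and the sporadic groups, the remaining cases are those almost simple $G$ whose socle is one of the other classical simple groups (unitary, symplectic, orthogonal), an exceptional group of Lie type ($G_2, F_4, E_6, E_7, E_8$), or a twisted group ($\,^2G_2, \,^3D_4, \,^2F_4, \,^2E_6$). In each case I would argue by contradiction, assuming a nilpotent minimal covering $\m{M} = \set{M_1, \ldots, M_n}$ exists.

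The central combinatorial tool is that each $M_i$ is the direct product of its Sylow subgroups, so any two elements of $M_i$ of coprime order must commute. One may therefore assume each member of $\m{M}$ is a maximal nilpotent subgroup of $G$. In an almost simple group of Lie type, such subgroups are tightly controlled by maximal tori and unipotent radicals of parabolics and fall into few conjugacy classes with manageable normalizers. This enables a counting estimate: if the conjugacy class $x^G$ is distributed among $M_{i_1}, \ldots, M_{i_k}$, then $\abs{x^G} \leq k \cdot \max_j \abs{M_{i_j} \cap x^G}$, where each intersection is controlled by the shape of $C_G(x)$ meeting a nilpotent subgroup of known type.

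The execution proceeds family by family. For each $G$ we select a pair of primes $p, q$ dividing $\abs{G}$ and distinguished conjugacy classes (typically a regular semisimple $p$-element $x$ and either a regular unipotent or a second regular semisimple $q$-element $y$) whose elements have small centralizers and large $G$-orbits. Combining the counting inequality for $x^G$ and $y^G$ with the constraint that any $M_i$ meeting both must contain a coprime-order commuting pair $(x', y')$ forces $n$ to exceed known upper bounds for $\sigma(G)$, contradicting minimality. For the smaller almost simple groups of small rank or small defining characteristic, a finite direct argument (or \textsc{Gap} computation) from the \textsc{Atlas} data will likely close the remaining cases.

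The main obstacle will be the exceptional Lie type families, where the structure of centralizers, Sylow normalizers, and maximal nilpotent subgroups is less uniform than for classical groups and must be accessed through the detailed work of Liebeck--Seitz together with \textsc{Atlas} tables. A secondary difficulty is that field, graph, and diagonal automorphisms can amalgamate nilpotent subgroups of the socle $S$ into strictly larger nilpotent subgroups of $G$; consequently the covering of the non-inner cosets of $G \setminus S$ must be carefully correlated with, rather than deduced from, a covering of $S$ itself.
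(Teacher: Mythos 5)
There is a genuine gap, and it is structural rather than a matter of missing details. Your proposed contradiction is that the counting estimates would ``force $n$ to exceed known upper bounds for $\sigma(G)$,'' but for almost all of the groups you would need to treat (unitary, symplectic, orthogonal, exceptional, twisted) no usable upper bound for $\sigma(G)$ is known --- determining or even bounding covering numbers of these families is precisely the open problem this literature studies, and the trivial upper bound (the number of maximal subgroups) is far too large to contradict any lower bound your class-counting would produce. The paper never compares $\sigma$ against an external bound in the main argument; instead it exploits the pairwise-generation property of a minimal covering ($\langle A_i,A_j\rangle=G$ for $i\ne j$) to derive a direct contradiction: two members that both contain conjugates of a well-chosen element are shown to lie in a common proper subgroup (a Borel subgroup, the normalizer of a Sylow $p$-subgroup, or an involution centralizer). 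Relatedly, your plan to run ``family by family'' over all remaining classical and exceptional types is not feasible as stated; the paper's decisive step, which your proposal lacks, is that a regular unipotent element lies in a \emph{unique} Sylow $p$-subgroup $U$ of $S$, so any nilpotent covering member containing it lies in $N_G(U)$, and pairwise generation then forces $N_G(U)$ to be maximal in $G$. By BN-pair theory this happens only when $S$ has (twisted) Lie rank one, or rank two with $G$ containing a graph or graph-field automorphism, collapsing the infinite list of families to $PSL(2,q)$, $PSU(3,q)$, $\null^2B_2(q)$, $\null^2G_2(q)$ and three rank-two exceptions. Without this reduction your programme has no realistic endpoint.

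Two further points. First, your ``secondary difficulty'' --- covering the outer cosets when $G\ne S$ --- is in fact a major part of the work: the paper needs a reduction to $G/S$ cyclic and then a separate technical lemma producing an element $s\in S$ and a maximal subgroup $K\supseteq S$ with $\gcd(\vert s\vert,\vert G/K\vert)=1$ and $G\ne KC_G(s)$, proved case by case using centralizer data for diagonal, field and graph automorphisms; nothing in your counting framework substitutes for this. Second, your premise that Bryce and Serena have disposed of all almost simple groups with socle $PSL(n,q)$ is too strong: the extensions of $PSL(3,q)$ by a graph or graph-field automorphism survive the paper's reduction and must still be handled.
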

As an immediate corollary we complete the proof of the aforementioned conjecture.
\begin{thm}\label{goalcor} Every group that has a nilpotent minimal covering is 
solvable.
\end{thm}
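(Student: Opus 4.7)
The plan is straightforward: Theorem \ref{goalcor} will follow immediately by combining Theorem \ref{goal} with the reduction of Bryce and Serena. Concretely, I would argue by contradiction. Suppose there exists a non-solvable group $G$ admitting a nilpotent minimal covering. Then \cite[Proposition 2.1]{BS2008} produces a finite almost simple group that also admits a nilpotent minimal covering. But this directly contradicts Theorem \ref{goal}. Hence no such $G$ can exist, and every group with a nilpotent minimal covering is solvable.

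The structural point is that the whole content of Theorem \ref{goalcor} has been repackaged into two inputs that together are a clean syllogism: the Bryce--Serena reduction transfers any putative counterexample from a very general setting (arbitrary groups) into the restricted world of finite almost simple groups, and Theorem \ref{goal} eliminates that world. The only thing to verify is that the hypotheses of \cite[Proposition 2.1]{BS2008} are indeed met by an arbitrary insolvable $G$ with a nilpotent minimal covering, but this is exactly the setup of that proposition, so there is nothing new to check.

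The main obstacle is not in the corollary at all; it is entirely absorbed into Theorem \ref{goal}. Deducing solvability once the almost simple case is handled is a one-line deduction, whereas ruling out nilpotent minimal coverings for \emph{every} finite almost simple group requires a detailed analysis through the classification of finite simple groups, building on the cases already settled in \cite{BS2008} (alternating and symmetric groups, projective special/general linear groups, Suzuki groups, sporadic groups) and completing the remaining families of Lie type. Once Theorem \ref{goal} is in hand, Theorem \ref{goalcor} requires no further ideas.
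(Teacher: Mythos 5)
Your proposal is correct and is exactly the paper's argument: the authors deduce Theorem \ref{goalcor} as an immediate corollary of Theorem \ref{goal} via the Bryce--Serena reduction \cite[Proposition 2.1]{BS2008}, which converts any insolvable counterexample into a finite almost simple one. Nothing further is needed.
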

The structure of solvable groups with such a minimal covering is well 
understood and  can be found in \cite[Theorem 11]{BS2001}.

A reasonable indication of the truth of Theorem \ref{goal} is suggested by the 
fact that in a finite non abelian simple group the order of the largest nilpotent 
subgroups is always much smaller than the order of the group (see \cite{Vdovin2000}).  

Our proof of Theorem \ref{goal} makes use of the classification of finite non abelian 
simple groups, and it can be outlined as follows. We start by taking a minimal order 
counterexample $G$, which is therefore  an almost simple group, say $S \le G \le 
\Aut{S}$, where $S$ is a non abelian simple group. 
If $S$ is a group of Lie type we reduce to the
cases when $S$ has Lie rank one or twisted Lie rank one, or $S$ has Lie rank 
two and $G$ contains a graph, or a graph-field, automorphism 
of $S$. Then we reduce to $G/S$ cyclic and we use a technical lemma (Lemma 
\ref{centralizers}) 
to eliminate the possibility that $G$ is itself not simple (Proposition 
\ref{reduction_to_simple}). Finally we prove that no finite simple 
group can be a counterexample (Proposition \ref{end_simple}).\\

Recently nilpotent coverings and their connections with maximal
non-nilpotent subsets in finite simple groups of Lie type have been studied in 
\cite{ABG2013}. \\

The notation of this paper is standard and mostly follows the book \cite{GLS3}. 
We remark that for the classical groups we have preferred to use the 
`classical' notation rather than Artin's single letter notation. Therefore we use 
$PSL(n,q)$ instead of $A_{n-1}(q)$  or $L_n(q)$, and similarly $PSp(2n,q)$ 
for $B_{2n}(q)$ and $PSU(n,q)$ for $\null^2 A_{n-1}(q)$ or $U_n(q)$. Note also that 
whenever we write $PSU(n,q)$ we mean that this group is defined over the field of 
order $q^2=p^f$ ($p$ a prime). Differently from \cite{GLS3}, we denote the Suzuki and 
the Ree groups over the fields $\mathbb{F}_{2^f}$ and $\mathbb{F}_{3^f}$ ($f$ odd), 
by $\null^2 B_2(2^f)$, $\null^2 G_2(3^f)$, instead of $\null^2 B_2(2^{\frac{f}{2}})$ 
and $\null^2 G_2(3^{\frac{f}{2}})$.\\

\section{Proofs of Theorems \ref{goal} and \ref{goalcor}}
We start with a simple but important observation. Assume that 
${\mathcal A} = \{A_1,\ldots, A_\sigma\}$
is a minimal covering of a group $G$, that is,
$$G=A_1\cup A_2\cup \ldots \cup A_{\sigma}$$
and $G$ is not the setwise union of fewer than $\sigma$ proper subgroups. 
Then for every 
$1\le i < j \le \sigma$, $\langle A_i,A_j \rangle = G$, since otherwise 
we could replace the subgroups $A_i$ and $A_j$ in ${\mathcal A}$ with 
$\langle A_i,A_j \rangle $, obtaining a covering of $G$ with fewer than $\sigma $ 
members. We will use this simple fact often.\\

The proof of Theorem \ref{goal} depends on understanding 
the structure of the finite simple groups of Lie type and the corresponding 
simple linear algebraic groups. Lemma \ref{prop_uno} is a key step in our proof. 
We first recall some important facts regarding algebraic groups.

A {\it regular unipotent element} of an algebraic group $\overline{G}$ is an element 
$g$ of $\overline{G}$ such that $\dm (C_{\overline{G}}(g))=\rk (\overline{G})$. 
The following result can be found in \cite{Carter85}, 
Propositions 5.1.2 and 5.1.3.
\begin{pro}\label{r.u.e.}
Let $\overline{G}$ be a connected reductive group. Then $\overline{G}$ admits regular 
unipotent elements and these elements form a unique conjugacy class in 
$\overline{G}$. 
Moreover, if $u$ is a unipotent element of $\overline{G}$, then the following 
conditions on $u$ are equivalent:
\begin{enumerate}
\item[(a)] $u$ is regular,
\item[(b)] $u$ lies in a unique Borel subgroup of $\overline{G}$, and
\item[(c)]  $u$  is conjugate to an element of the form 
$\prod_{\alpha\in \Phi^+} x_{\alpha}(\lambda_{\alpha})$ with 
$\lambda_{\alpha}\ne 0$ for all $\alpha\in \Delta$ (where
$\Phi^+$ and $\Delta$ denote, respectively, a positive system of roots and 
its fundamental system).
\end{enumerate}
\end{pro}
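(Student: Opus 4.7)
The plan is to fix a maximal torus $T$ and a Borel subgroup $B = T \cdot U$, and to work inside the unipotent radical $U$. Every unipotent element of $\overline{G}$ is conjugate into some Borel and all Borels are conjugate, so we may assume $u \in U$. By the Chevalley structure theorem $U$ is, as a variety, the product of the root subgroups $U_\alpha = \{x_\alpha(t) : t \in k\}$, $\alpha \in \Phi^+$, so any $u \in U$ admits a unique expression $u = \prod_{\alpha \in \Phi^+} x_\alpha(\lambda_\alpha)$ once an ordering is fixed, and $T$ acts by $t \cdot \lambda_\alpha = \alpha(t)\lambda_\alpha$.

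To establish existence, I would take $u_0 = \prod_{\alpha \in \Phi^+} x_\alpha(\lambda_\alpha)$ with $\lambda_\alpha \ne 0$ for $\alpha \in \Delta$ and compute its centralizer. Using the Chevalley commutator formulas together with a filtration of $U$ by root height, one shows $\dm C_U(u_0) = \rk(\overline{G}_\mathrm{der})$, where $\overline{G}_\mathrm{der}$ is the derived subgroup. Since $C_T(u_0)$ is cut out by the characters $\alpha \in \Delta$, and these generate the character lattice modulo the center, $C_T(u_0) = Z(\overline{G})$. Finally $C_{\overline{G}}(u_0)^\circ \subseteq B$, because $u_0$ lies in a unique Borel (a fact one proves in parallel with (b), or extracts from the Bruhat decomposition). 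Combining these gives $\dm C_{\overline{G}}(u_0) = \dim Z(\overline{G})^\circ + \rk(\overline{G}_\mathrm{der}) = \rk(\overline{G})$, so $u_0$ is regular.

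For the remaining implications, the key observation is that the image of $u \in U$ in the abelianization $U/[U,U]$ is the tuple of simple coefficients $\lambda_\alpha$, $\alpha \in \Delta$. If some such coefficient vanishes, then $C_U(u)$ gains an extra dimension from the corresponding $U_\alpha$, and propagating this up the height filtration yields $\dm C_U(u) > \rk(\overline{G}_\mathrm{der})$; hence $u$ cannot be regular. This proves (a) $\Rightarrow$ (c); combined with the transitivity of the $T$-action on tuples $(\lambda_\alpha)_{\alpha \in \Delta}$ with all entries nonzero, it also shows that any two regular unipotent elements are conjugate. For the equivalence of (b) with (a) and (c): if $u$ lies in a unique Borel, then $C_{\overline{G}}(u)^\circ$ normalizes it and therefore lies in it, and the bound inside $B$ computed above gives (b) $\Rightarrow$ (a); conversely, assuming (c), if $u \in B \cap {}^g B$ with $g \notin B$, writing $g \in B w B$ via Bruhat forces the coefficient of some simple root moved by $w$ to vanish, contradicting (c).

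The main obstacle will be the centralizer dimension count: showing $\dm C_U(u_0)$ is exactly $\rk(\overline{G}_\mathrm{der})$, rather than simply bounded below, requires carefully unwinding the Chevalley commutator formulas along the height filtration $U = U_1 \supseteq U_2 \supseteq \cdots$, and this is delicate in small characteristics, where some structure constants may vanish and enlarge the centralizer. This is the technical core of Carter's treatment, and I would import his calculation rather than reproduce it.
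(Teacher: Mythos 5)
The paper does not prove this proposition at all: it is quoted verbatim from Carter (\emph{Finite groups of Lie type}, Propositions 5.1.2 and 5.1.3), so there is no internal proof to compare against. Your sketch follows the standard line of Carter's own argument, and you candidly defer the technical core (the exact computation $\dm C_U(u_0)=\rk(\overline{G}_{\mathrm{der}})$) back to Carter, which is in effect what the paper does wholesale. That is a legitimate stance, but two of the steps you do claim to supply are not yet sound as written.

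First, the single-conjugacy-class claim does not follow from transitivity of $T$ on the tuples $(\lambda_\alpha)_{\alpha\in\Delta}$ with nonzero entries: conjugation by $T$ rescales \emph{all} the coefficients $\lambda_\alpha$, $\alpha\in\Phi^+$, so after normalizing the simple coefficients to $1$ two regular elements of $U$ may still differ in the higher-root coordinates. The standard repair is the dense-orbit argument: once $\dm C_{\overline{G}}(u_0)=\rk(\overline{G})$ and $C_{\overline{G}}(u_0)^\circ\leq B$ are known, the $B$-orbit of $u_0$ has dimension $\dm B-\rk(\overline{G})=\dm U$, hence is open dense in the irreducible variety $U$; any two such orbits meet and therefore coincide. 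Second, your implication (b) $\Rightarrow$ (a) leans on ``the bound inside $B$ computed above,'' but that bound was an equality established only for the special element $u_0$ of (c), not for an arbitrary $u$ lying in a unique Borel; for a general element one only has the lower bound $\dm C_{\overline{G}}(u)\geq\rk(\overline{G})$, which goes the wrong way. The clean route is the contrapositive $\neg(\mathrm{c})\Rightarrow\neg(\mathrm{b})$: if the coefficient of a simple root $\alpha$ vanishes, then $u$ lies in the unipotent radical of the minimal parabolic $P_\alpha$, which is normalized by $P_\alpha\supsetneq B$ and hence contained in the distinct Borel $s_\alpha Bs_\alpha^{-1}$ as well; this closes the cycle $(\mathrm{a})\Rightarrow(\mathrm{c})\Rightarrow(\mathrm{b})\Rightarrow(\mathrm{c})\Rightarrow(\mathrm{a})$ without any centralizer estimate for arbitrary $u$.
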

In particular, if $\overline{G}$ is a simple linear algebraic group over an 
algebraically 
closed field in characteristic $p$ and $F$ is a Frobenius endomorphism of 
$\overline{G}$, 
then the finite group of Lie type $G=\overline{G}^F$ contains $p$-elements 
(that we still call \emph{regular unipotent}) which have the property that 
each lies in a unique Sylow $p$-subgroup of $G$ (see 
\cite[Proposition 5.1.7]{Carter85}).

\begin{lem}\label{prop_uno} Let $G$ be an almost simple group whose socle $S$ is a 
group of Lie type in characteristic $p$. 
Suppose that $\mathcal{A}=\{A_i\}_{i=1}^{\sigma}$ is a nilpotent minimal covering 
of $G$. Then the following hold:
\begin{enumerate}
\item[(a)] $\sigma$ is greater than the number $n_p(S)$ of Sylow $p$-subgroups 
of $S$, and 
\item[(b)] if $U$ is a Sylow $p$-subgroup of $S$, then $N_G(U)$ is a maximal subgroup 
of $G$.
\end{enumerate}
\end{lem}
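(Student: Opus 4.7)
The plan rests on two simple facts: a finite nilpotent group has a unique Sylow $p$-subgroup, and a regular unipotent element of $S$ lies in a unique Sylow $p$-subgroup of $S$ (as recalled after Proposition \ref{r.u.e.}).

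For part (a), I would pick, in each Sylow $p$-subgroup $U_j$ of $S$ ($j=1,\dots,n_p(S)$), a regular unipotent element $u_j$, and let $A_{i(j)}$ be a member of $\mathcal{A}$ containing $u_j$. Write $A_i = P_i \times H_i$ with $P_i$ the unique Sylow $p$-subgroup of $A_i$ and $H_i$ the Hall $p'$-subgroup. Since $S \nor G$, the intersection $P_{i(j)} \cap S$ is a $p$-subgroup of $S$ containing $u_j$, hence is contained in $U_j$. If $i(j)=i(j')$, both $u_j$ and $u_{j'}$ lie in $P_{i(j)} \cap S \le U_j \cap U_{j'}$, and the uniqueness of the Sylow $p$-subgroup of $S$ containing a regular unipotent forces $U_j=U_{j'}$, so $j=j'$. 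This gives $\sigma \ge n_p(S)$. For the strict inequality, note that if $u$ is a regular unipotent in $A_i$ then $H_i$ centralizes $u$, so $H_i \cap S \le C_S(u)$, which is a $p$-group (a standard fact for simple groups of Lie type in the defining characteristic); hence $H_i \cap S = 1$ and $A_i \cap S = P_i \cap S$ is a $p$-subgroup of $S$. Were every $A_i$ to contain a regular unipotent, $S = \bigcup_i (A_i \cap S)$ would be a union of $p$-subgroups, which is impossible. So at least one extra $A_i$ must appear in $\mathcal{A}$, and $\sigma \ge n_p(S)+1$.

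For part (b), suppose for contradiction that $N_G(U) \le M < G$ strictly. The Frattini argument gives $G = S\cdot N_G(U)$, hence $G = SM$, which forces $M/(M\cap S) \cong G/S$ and $M\cap S \supsetneq N_S(U)=B$; by BN-pair theory $M\cap S$ is a parabolic subgroup $P_I$ of $S$ with $I \ne \emptyset$. Since $|P_I|_p = |S|_p$, the Sylow $p$-subgroups of $P_I$ are Sylow $p$-subgroups of $S$, and there are $|W_I|\ge 2$ of them, so there is some $U' \ne U$ with $U' \le P_I \le M$. Writing $U'=gUg^{-1}$ with $g\in P_I \le M$ and using that $N_G(U) \le M$, I obtain $N_G(U')=g\,N_G(U)\,g^{-1} \le M$ as well.

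It then suffices to prove the following claim: \emph{any nilpotent subgroup $A \le G$ containing a regular unipotent $u \in U$ is contained in $N_G(U)$}. Granted this, picking regular unipotents $u \in U$, $u' \in U'$ and the members $A_i, A_j$ of $\mathcal{A}$ containing them (with $i \ne j$ by part (a)), we obtain $A_i \le N_G(U) \le M$ and $A_j \le N_G(U') \le M$, so $\langle A_i, A_j\rangle \le M < G$, contradicting the fact that any two distinct members of a minimal covering generate $G$. For the claim, decompose $A = P \times H$: every element of $H$ centralizes $u$ and hence fixes the unique Sylow $p$-subgroup of $S$ containing $u$, so $H \le C_G(u) \le N_G(U)$; and for $g \in P$, the element $gug^{-1}$ lies in $P \cap S \le U$ and is still a regular unipotent of $S$, so $U$ and $gUg^{-1}$ are both the unique Sylow $p$-subgroup of $S$ containing $gug^{-1}$, forcing $g \in N_G(U)$. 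The main technical obstacle is this last ``Sylow-preservation'' step, which requires the invariance of the regular-unipotent property under conjugation by arbitrary elements of $G$ together with a uniform handling of $C_S(u)$ across all Lie types and characteristics.
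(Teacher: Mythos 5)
Your proof is correct, and its core --- that a nilpotent subgroup containing a regular unipotent element $u$ of $S$ must normalize the unique Sylow $p$-subgroup $U$ of $S$ containing $u$, proved by splitting the nilpotent group into its Sylow $p$-part and Hall $p'$-part and exploiting $P\cap S\le U$ --- is exactly the paper's argument, and your counting of Sylow subgroups for $\sigma\ge n_p(S)$ matches as well. You diverge in two auxiliary steps. For the strict inequality in (a), the paper observes that $\sigma=n_p(S)$ would make $G$ the union of the conjugates of the single proper subgroup $N_G(U)$, which is impossible by a theorem of Fein, Kantor and Schacher; you instead use that $C_S(u)$ is a $p$-group, so each member containing a regular unipotent meets $S$ in a $p$-subgroup and $S$ cannot be a union of $p$-subgroups. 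Your route is more elementary at the finish (Cauchy's theorem rather than the Fein--Kantor--Schacher theorem), but it leans on the unproved assertion that $C_S(u)$ is a $p$-group uniformly across all Lie types and characteristics; this is true (for $\overline{G}$ adjoint, $C_{\overline{G}}(u)$ lies in the unipotent radical of the unique Borel subgroup containing $u$), but, as you yourself flag, it is the real technical burden of your version, and the paper's proof of this lemma deliberately avoids needing it. For (b), your detour through the Frattini argument and the identification of $M\cap S$ with a parabolic subgroup is unnecessary: given $N_G(U)<K<G$, any $k\in K\setminus N_G(U)$ already produces $U^k\ne U$ with $N_G(U^k)=N_G(U)^k\le K$, which is all your argument actually uses, and this is precisely what the paper does. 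Finally, ``$i\ne j$ by part (a)'' should read ``by the injectivity argument in the proof of (a)'' --- namely that one nilpotent member cannot contain regular unipotent elements from two distinct Sylow $p$-subgroups of $S$ --- since the statement of (a) alone does not give this.
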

\begin{proof} 
Let $u$ be a regular unipotent element of $S$ and let $U$ be the unique 
Sylow $p$-subgroup of $S$ containing $u$. Assume that $u\in A_i$. 
Since $A_i$ is nilpotent, $O_{p'}(A_i)\leq C_G(u)$; in particular $O_{p'}(A_i)$ 
normalizes $U$. 
We now prove that $O_p(A_i)\leq N_G(U)$. Let $y\in O_p(A_i)$. Then $y$ normalizes 
$O_p(A_i)\cap S$, and since $u\in O_p(A_i)\cap S=(O_p(A_i)\cap S)^y\leq U^y$ and $U$ 
is the unique Sylow $p$-subgroup of $S$ containing $u$, we have $U^y=U$, as we 
wanted. It follows that $A_i\leq N_G(U)$.
As $\langle A_i,A_j\rangle =G$ for $i\ne j$, two different members of $\mathcal{A}$ 
cannot normalize the same Sylow $p$-subgroup of $S$. This shows that 
$\sigma\geq \vert Syl_p(S)\vert $. Moreover, since a finite group is never the union 
of conjugates of a unique proper subgroup (\cite[Theorem 1]{Kantor}),
in fact $\sigma> \vert Syl_p(S)\vert $.

Assume now that $N_G(U)<K<G$. Then for every $k\in K\setminus N_G(U)$, the element 
$u^k$ is still regular unipotent in $S$ and lies in $U^k\ne U$. If $u\in A_i$ and 
$u^k\in A_j$, we have that for $i\ne j$, the subgroup  $A_j$ is contained in 
$N_G(U)^k$ and 
$$G=\langle A_i,A_j\rangle \leq \langle N_G(U), N_G(U)^k\rangle \leq K,$$ 
which is a contradiction. Thus $N_G(U)$ is maximal in $G$.
\end{proof}

We next determine in which of these groups $G$ the normalizer of a Sylow $p$-group of 
$S$ is a maximal subgroup of $G$.
\begin{pro}\label{List A}
Let $G$ be a finite almost simple group whose socle $S$ is a group of Lie type 
in characteristic $p$. Let $U$ be a Sylow $p$-subgroup of $S$. Then $N_G(U)$ is \
maximal in $G$ if and only if one of the following holds: 
\begin{enumerate}
\item[(a)] $S\in\{PSL(2,q), PSU(3,q), \null^2 B_2(q), \null^2 G_2(q) \} $, or 
\item[(b)] $S\in\{PSL(3,q), PSp(4,2^f), G_2(3^f)\}$ and $G$ contains a graph or 
graph-field 
automorphism of $S$.
\end{enumerate}
\end{pro}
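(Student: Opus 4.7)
The plan is to reduce maximality of $N_G(U)$ to a combinatorial transitivity question on the (relative) Dynkin diagram of $S$. Let $B=N_S(U)$ be the Borel subgroup of $S$ with unipotent radical $U$. Since $U=O_p(B)$ is characteristic in $B$ one has $N_G(U)=N_G(B)$, and Frattini's argument applied to $S\nor G$ yields $G=S\cdot N_G(U)$, so that $N_G(U)/B\cong G/S$.

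Suppose $N_G(U)\le K\le G$. Then $K\cap S$ contains $B$ and hence, by the Borel--Tits theorem, equals a standard parabolic subgroup $P_J$ of $S$ for some $J\subseteq\Delta$, where $\Delta$ denotes the set of (relative) simple roots of $S$. Because $N_G(U)\le K$ normalizes $P_J$, the set $J$ must be stable under the permutation action of $N_G(U)/B\cong G/S$ on $\Delta$ induced by permutation of the standard maximal parabolic subgroups. The case $J=\emptyset$ forces $K\le N_G(B)=N_G(U)$, and $J=\Delta$ gives $S\le K$, whence $K=G$ from $G=S\cdot N_G(U)$; meanwhile, for any proper nonempty $G/S$-invariant $J$, the subgroup $N_G(P_J)$ is a proper intermediate subgroup between $N_G(U)$ and $G$. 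Thus $N_G(U)$ is maximal in $G$ if and only if $G/S$ acts transitively on $\Delta$.

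It remains to identify the almost simple $G$ for which this transitive action exists. Diagonal and field automorphisms of $S$ fix $\Delta$ pointwise, so only graph and graph-field automorphisms can contribute. When the relative rank of $S$ is one, $|\Delta|=1$ and transitivity is automatic; the rank one groups of Lie type are exactly those in (a). When the relative rank is two, transitivity requires an automorphism of $S$ swapping the two nodes of the relative Dynkin diagram, and such a diagram swap is realized as an automorphism of $S$ precisely for $A_2(q)$ (by the ordinary graph automorphism, for every $q$) and for $C_2(2^f)=PSp(4,2^f)$ and $G_2(3^f)$ (by graph-field automorphisms coming from the exceptional isogenies in characteristics $2$ and $3$), giving (b). The main obstacle is ruling out all groups of relative rank at least three: for these $|\Delta|\ge 3$, while a case-by-case inspection of the twisted Dynkin diagrams shows that the group of diagram automorphisms realised as genuine automorphisms of $S$ has order at most $2$, with the sole exception of $D_4$, where the triality of order $3$ fixes the central node. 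Hence no subgroup of $G/S$ can act transitively on $\Delta$ in relative rank $\ge 3$, and the classification in (a)--(b) is complete.
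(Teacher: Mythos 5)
Your proof is correct and follows essentially the same route as the paper: both arguments rest on the BN-pair fact that the overgroups of a Borel subgroup in $S$ are exactly the parabolics, combined with the observation that diagonal and field automorphisms fix the (relative) Dynkin diagram pointwise while graph and graph-field automorphisms permute its nodes. Your packaging of the criterion as ``$N_G(U)$ is maximal iff $G/S$ acts transitively on $\Delta$'' cleanly unifies the two directions that the paper handles by separate case analyses ($G\leq S^*$ versus $G\not\leq S^*$, with an explicit verification of maximality in the rank-two graph-automorphism cases).
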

\begin{proof}
Assume first that $G=S$ is simple. Then $B=N_S(U)$ is a Borel subgroup of $S$ and, 
by general BN-pair theory (\cite[Proposition 8.2.1 and Theorem 13.5.4]{Carter72}), 
the lattice of overgroups of $B$ in $S$ 
consists of $B$, the parabolic subgroups of $S$, and $S$. In particular, $N_S(U)$ 
is maximal in $S$ if and only if it is the unique parabolic subgroup of $S$, which is 
the case exactly when $S$ is of Lie rank one or, respectively, of twisted Lie rank 
one. Only the finite simple groups listed in (a) have this property.

Assume now that $G>S$ and let $S^*$ be the extension of $S$ by the diagonal and field 
automorphisms of $S$. The group $S^*$ has a BN-pair whose Borel subgroup is 
$B^*=N_{S^*}(U)$, since to construct $S^*$ from $S$ we can choose diagonal and 
field automorphisms that normalize every root subgroup of $U$. Of course, the 
BN-pair restricts to $G\cap S^*$. Therefore, if  $G\leq S^*$, we have immediately 
that $N_G(U)$ is maximal in $G$ precisely when $G$ is an extension of some simple 
group that appears in (a).

Suppose then that $G\not\leq S^*$, that is, that $G$ contains a graph or graph-field 
automorphism of $S$. Note that this happens exactly when $S$ is one of the following 
(see \cite{Carter72} or \cite{Atlas}):
$$PSL(n,q), \, n\geq 3,\quad PSp(4,2^f),\quad D_n(q), \quad G_2(3^f),\quad F_4(2^f),
\quad E_6(q).$$
Moreover, non-trivial graph automorphisms, modulo the field automorphisms, 
always have order $2$ or $3$ (order $3$ occurs only in the case $S=D_4(q)$), 
and such automorphisms interchange the fundamental root subgroups. By looking at 
the action of such graph automorphisms on the Dynkin diagrams, only when the Lie rank 
of $S$ is two can it be the case that $N_G(U)$ is maximal. This condition excludes 
all the possible groups except when $S$ is one of the following: $PSL(3,q)$, 
$PSp(4,2^f)$ or $G_2(3^f)$. 
Finally we claim that in these groups $N_G(U)$ is indeed a maximal subgroup of 
$G$. By our earlier argument, the group $G^*=G\cap S^*$ has a BN-pair with Borel 
subgroup $B^*=N_{G^*}(U)$, whose overgroups are $B^*$, $P_1^*$, $P_2^*$ and $G^*$, 
where $P_1^*$ and $P_2^*$ are the meets of $G$ with the extensions, by diagonal and 
field automorphisms, of the two parabolic subgroups of $S$ that contain $B$.
Now $\vert G:G^*\vert = 2$ and any element of $N_G(U)\setminus N_{G^*}(U)$
interchanges $P_1^*$ and $P_2^*$, since it interchanges the two fundamental root 
subgroups. Suppose that $M$ is a maximal subgroup of $G$ containing $N_G(U)$. Then 
$M\cap S^*$ contains $B^* = N_{G^*}(U)$, and so $M\cap S^* \in\{B^*, P_1^*, 
P_2^*,G^*\}$. By the Frattini argument, $G=SN_G(U)$. Thus there is an element 
$g$ in $N_G(U)\setminus N_{G^*}(U)$, such that, by the above, $g$ interchanges 
$P_1^*$ and $P_2^*$. But $g$ normalizes $M\cap S^*$; thus $M\cap S^* = B^*$ or $S^*$. 
If $M\cap S^* = S^*$, then $M = S^*N_G(U) = G$, a contradiction. Hence 
$M\cap S^* = B^*$ and so $M = N_G(U)$. We conclude that $N_G(U)$ is maximal in $G$ in 
all these cases.
\end{proof}

We make a further reduction that applies to any minimal counterexample to 
Theorem \ref{goal}.
\begin{lem}\label{reduction_to_cyclic} 
If $G$ is a minimal counterexample to Theorem \ref{goal}, where $S\leq G\leq \Aut{S}$ 
and $S$ is a finite non abelian simple group, then $G/S$ is a cyclic group.
\end{lem}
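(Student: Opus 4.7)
The plan is to argue by contradiction: assume $G/S$ is not cyclic and produce either a smaller almost simple counterexample to Theorem~\ref{goal} or a nilpotent covering of $G$ strictly smaller than $\mathcal{A}$, each of which contradicts our standing hypotheses. A first reduction uses that $G/S$ embeds in $\Out{S}$, hence is solvable by Schreier's conjecture (a consequence of the CFSG). Since every element of a non-cyclic finite group lies in a proper (indeed maximal) subgroup, we may write $G=\bigcup_{l=1}^{k}H_l$, where the $H_l$ are the preimages in $G$ of the maximal subgroups of $G/S$; each $H_l$ is a proper subgroup of $G$ properly containing $S$ and so is insolvable. By the minimality of $G$, no $H_l$ admits a nilpotent minimal covering.

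For each $l$, the collection $\{A_j\cap H_l : 1\le j\le\sigma\}$ is a nilpotent covering of $H_l$. Removing any redundant members, I would obtain an irredundant nilpotent cover $\mathcal{B}_l$ of $H_l$ of size $\tau_l\le\sigma$, where necessarily $\tau_l>\sigma(H_l)$ because $H_l$ has no nilpotent minimal covering. Two observations will drive the rest of the argument. First, each $A_j$ contains some exclusive element $x_j\notin\bigcup_{i\ne j}A_i$ by the minimality of $\mathcal{A}$, so whenever $A_j\le H_l$ the set $A_j\cap H_l=A_j$ is an essential member of $\mathcal{B}_l$. Second, if some $A_j$ satisfies $A_jS=G$, then $G/S$ is a quotient of the nilpotent group $A_j$, so $G/S$ is itself nilpotent.

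From here I would split into cases. If some $A_jS=G$, then $G/S$ is nilpotent and noncyclic, hence either has a noncyclic Sylow subgroup or involves at least two primes; in either situation I would use the Frattini decomposition of $G/S$ and the maximality of $N_G(U)$ from Lemma~\ref{prop_uno}(b) to locate two members of $\mathcal{A}$ lying in a common proper overgroup of $S$, contradicting the irredundancy of $\mathcal{A}$. If instead $A_jS<G$ for every $j$, then every $A_j$ is contained in some $H_l$; a careful bookkeeping of which $A_j$'s are essential in which $\mathcal{B}_l$, combined with the strict inequalities $\tau_l>\sigma(H_l)$, should allow me to replace some $A_j$'s by nilpotent subgroups coming from smaller (non-nilpotent) minimal covers of the $H_l$, reducing the size of the global covering $\mathcal{A}$ and thereby contradicting its minimality.

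I expect the main obstacle to be precisely this last bookkeeping step: turning the non-minimality of the induced covers $\mathcal{B}_l$ into an actual redundancy inside $\mathcal{A}$ itself, while preserving both nilpotency and the property that all new members are proper subgroups of $G$. The most useful auxiliary tools should be the maximality of $N_G(U)$ from Lemma~\ref{prop_uno}(b) and Kantor's theorem invoked in the proof of Lemma~\ref{prop_uno}, which together sharply constrain how the $A_j$ can be distributed among the $H_l$.
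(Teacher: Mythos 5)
Your proposal has a genuine gap, and it sits exactly where you predicted: the ``bookkeeping step'' that is supposed to convert the non-minimality of the induced covers $\mathcal{B}_l$ into a redundancy inside $\mathcal{A}$ is never carried out, and it is not clear it can be. Knowing that each irredundant induced cover $\mathcal{B}_l$ of $H_l$ has size $\tau_l>\sigma(H_l)$ tells you that $H_l$ admits a smaller (non-nilpotent) cover, but there is no mechanism for importing those non-nilpotent subgroups back into $\mathcal{A}$ while keeping the global cover nilpotent; the first case (some $A_jS=G$, so $G/S$ nilpotent) is likewise left at the level of an unverified hope about ``locating two members in a common proper overgroup.'' The decomposition $G=\bigcup_l H_l$ does contain the right germ, but you use it in the wrong direction.

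The paper's argument is a short counting contradiction that you miss entirely. Pulling back any covering of $G/S$ gives a covering of $G$ by proper subgroups, so $\sigma(G)\le\sigma(G/S)$; and since $G/S$ is noncyclic it is covered by its nontrivial cyclic subgroups, whence $\sigma(G)\le\sigma(G/S)<\vert G/S\vert\le\vert\Out{S}\vert$. (No nilpotency of the pulled-back cover is needed: $\sigma(G)$ is the minimal size of \emph{any} cover, and the hypothesis is only that some cover of that minimal size is nilpotent.) On the other side, Lemma~\ref{prop_uno}(a) --- which you never invoke; you only use part (b) --- gives $\sigma(G)>n_p(S)$. Combined with Proposition~\ref{List A}, which restricts $S$ to a short explicit list (the noncyclic-outer-automorphism cases reduce to $PSL(2,q)$, $PSL(3,q)$, $PSU(3,q)$), one checks directly that $n_p(S)>\vert\Out{S}\vert$ in every case, a contradiction. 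Your plan bypasses both the lower bound from Lemma~\ref{prop_uno}(a) and the reduction to the explicit list, which are precisely the two ingredients that make the contradiction effective; without them the inductive bookkeeping has nothing quantitative to push against.
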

\begin{proof}
Assume by contradiction that $G$ has a nilpotent minimal covering with 
$\sigma=\sigma(G)$ subgroups and that $G/S$ is not cyclic. Note that this assumption 
automatically excludes the cases when $S$ is an alternating group $A_n$ with $n\ne 6$ 
or a sporadic group, since in those cases $\vert G/S\vert =2$. Thus $S$ is a simple 
group of Lie type, and, by Lemma \ref{prop_uno} and Proposition \ref{List A}, the 
pair $(G,S)$ is one that appears in the statement of Proposition \ref{List A}. 
We may also assume $S$ is not one of 
$\null^2 B_2(2^f), \null^2 G_2(3^f), PSp(4,2^f),$ 
or $G_2(q)$, since for these groups $\Out{S}$ is cyclic of order $f$ or $2f$. 
Trivially, we may cover $G/S$ using all its non-trivial cyclic subgroups, 
so in particular $\sigma(G/S)< \vert G/S\vert$. 
Since $\sigma\leq \sigma(G/S)$, we deduce that $\sigma<\vert \Out{S}\vert$. 
By Lemma \ref{prop_uno}, then, we have that $n_p(S)<\vert \Out{S}\vert$ 
(where, as before, $n_p(G)$ denotes the number of Sylow $p$-subgroups of $S$, 
that is, the index of a Borel subgroup of $S$ in $S$). 
But for the remaining possible groups listed in Proposition \ref{List A} 
we have
\begin{enumerate}
\item[(a)]$n_p(PSL(2,q)) = q+1$ and $\vert \mbox{Out}(PSL(2,q))\vert = df$, 
where $q = p^f$ and $d = (2,q-1)$, 
\item[(b)]$n_p(PSL(3,q)) = (q+1)(q^2+q+1)$ and 
$\vert \mbox{Out}(PSL(3,q))\vert = 2df$, 
where $q = p^f$ and $d = (3,q-1)$, and
\item[(c)]$n_p(PSU(3,q)) = q^3+1$ and $\vert \mbox{Out}(PSU(3,q))\vert = df$, 
where $q^2 = p^f$ and $d = (3,q+1)$,
\end{enumerate}
and it is straightforward to show in each case that 
$n_p(S)>\vert \mbox{Out}(S)\vert$.
\end{proof}

The following technical lemma is the key ingredient to reduce to the case 
that a minimal counterexample to Theorem \ref{goal} is necessarily a finite 
simple group. 
\begin{lem}\label{centralizers}
Let $G$ be an almost simple group with socle $S$ such that $G/S$ is a cyclic group. 
Assume also that if $S$ is of Lie type, then the pair $(G,S)$ appears in the 
statement of Proposition \ref{List A}. Then there exist some element $s\in S$ 
and some maximal subgroup $K$ of $G$ containing $S$ such that 
$g.c.d.(\vert s\vert, \vert G/K\vert)=1$ and $G\ne KC_G(s)$.
\end{lem}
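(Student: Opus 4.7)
The plan is to reduce the required inequality to a fusion condition. Since $G/S$ is cyclic and therefore abelian, every maximal subgroup $K$ of $G$ containing $S$ is normal in $G$ and has prime index $r$ dividing $|G:S|$. For such $K$ and any lift $g\in G\setminus K$ of a generator of $G/K$, a short calculation using $K\trianglelefteq G$ shows that $G\ne KC_G(s)$ is equivalent to $C_G(s)\leq K$, which in turn (since $K\trianglelefteq G$) is equivalent to the assertion that $g(s)$, the image of $s$ under $g$ viewed as an automorphism of $S$, is not $K$-conjugate to $s$. Thus the lemma amounts to exhibiting a prime $r$ dividing $|G:S|$ together with some $s\in S$, of order coprime to $r$, whose $K$-conjugacy class is moved by the outer automorphism corresponding to the nontrivial coset of $K$.

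For $S$ alternating of degree $n\geq 5$ with $n\ne 6$ we have $|G:S|=2$, hence $r=2$ and $K=S$, and one selects $s$ to be an $n$-cycle (if $n$ is odd) or an $(n-1)$-cycle (if $n$ is even); in both instances $|s|$ is odd and $C_{\Sym{n}}(s)\leq \Alt{n}$, so the $\Sym{n}$-class of $s$ splits in $\Alt{n}$ into two classes which are exchanged by the outer automorphism. For sporadic $S$ with $|G:S|=2$ an inspection of the Atlas conjugacy class fusion in each of the relevant cases supplies an odd-order $S$-class that is not fused with any other class by the outer automorphism. The exceptional case $S\cong A_6$ can be disposed of either directly or by using the identification $A_6\cong PSL(2,9)$ and appealing to the Lie type analysis.

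For $S$ of Lie type, so that $(G,S)$ lies in the list of Proposition~\ref{List A}, the main idea is to take $s$ of prime order $\ell$ lying in a Coxeter-type or Singer maximal torus of $S$, where $\ell$ is a Zsigmondy prime for $p^m-1$ with $m$ the order of the torus. Such an $\ell$ exceeds $m$ and hence is coprime to $|\Out{S}|$, giving $\gcd(|s|,r)=1$ automatically; moreover the Zsigmondy condition on $\ell$ forces any field automorphism fixing $s$ to be trivial. For the groups in part (a) of Proposition~\ref{List A} no graph automorphism is present, so $C_G(s)$ reduces to the image of the torus in $S$ and the conclusion is immediate. In part (b), the chief difficulty is to show that the graph (or graph-field) automorphism moves $s$ out of its $K$-conjugacy class; this requires analyzing $N_G(\langle s\rangle)$ and using that the normalizer of the chosen torus in $PGL$ is a Frobenius group whose action on $\langle s\rangle$ is generated by the Frobenius $s\mapsto s^p$, and so cannot invert $s$. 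A finite number of small parameter cases where Zsigmondy fails (or where the chosen $\ell$ divides $|\Out{S}|$) must be dispatched separately by direct calculation, in particular for $PSL(2,p)$ with small $p$, where a regular unipotent element with suitably chosen coordinates furnishes an alternative $s$.
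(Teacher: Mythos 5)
Your opening reduction (a maximal $K\supseteq S$ is normal of prime index $r$, and $G\ne KC_G(s)$ iff a generator of $G/K$ moves the $K$-class of $s$) is correct and is implicitly the paper's strategy, and your alternating case coincides with the paper's. But beyond that the proposal has genuine gaps. Already in the sporadic case you state the fusion condition backwards: you need an odd-order class that \emph{is} sent to a different $S$-class by the outer automorphism; if the class is stabilized, then some outer element centralizes a representative and $G=SC_G(s)$. More seriously, the uniform choice of a torus element of Zsigmondy prime order fails exactly where the paper has to work hardest. In the diagonal case, e.g.\ $G=PGL(2,q)$ with $q$ odd, the only admissible $K$ is $S=PSL(2,q)$, and the centralizer in $G$ of a generator of the nonsplit torus of $S$ is the full nonsplit torus of $PGL(2,q)$, of order $q+1$, which surjects onto $G/S$; hence $G=SC_G(s)$. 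The same happens for $G=PGU(3,q)$ with $3\mid q+1$. The paper avoids this by taking $s$ to be a \emph{regular unipotent} element there (its centralizer in $\widehat S$ lies in $S$); your unipotent fallback is reserved for small Zsigmondy exceptions, which is a different issue.

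In the field-automorphism case, the condition you reduce to --- ``any field automorphism fixing $s$ is trivial'' --- is weaker than what is needed: an element of $C_G(s)$ outside $\widehat S$ has the form $g\delta^h\varphi^k$ with $g\in S$, and its existence is equivalent to $s^{\varphi^{-k}}$ being $\widehat S$-conjugate to $s$. Excluding this class-level coincidence is the hard step, which the paper carries out by embedding $\widehat S$ into $PGL(m,\overline{\mathbb F}_p)$ and comparing the eigenvalue multisets of $\alpha$ and $\alpha^{p^{-k}}$; your proposal omits it. Finally, in part (b) of Proposition \ref{List A} the claim that the relevant action ``cannot invert $s$'' is false for $s$ of prime Zsigmondy order $\ell$: since $fz$ is even here, $\ell$ divides $p^{fz/2}+1$ and hence divides the order of the centralizer of a graph--field involution. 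Concretely, for $S=PSL(3,16)$ and $G$ generated by $S$ and a graph--field involution $x$, one has $C_S(x)\cong PSU(3,4)$, whose order is divisible by the Zsigmondy prime $13$ of $2^{12}-1$; so $x$ centralizes a conjugate of $s$ and $G=SC_G(s)$. This is precisely why the paper takes $s$ to generate the \emph{full} cyclic torus of order $(q^2+q+1)/d$, $q^2+1$ or $q^2-q+1$ (no such cyclic subgroup embeds in the involution centralizers $PSU(3,q)$, $\null^2 B_2(q)$, $\null^2 G_2(q)$) and argues via the parity of $\left\vert C_{G^*}(T)\right\vert$ and \cite[Proposition 4.9.1]{GLS3} rather than via fusion in the torus.
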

\begin{proof}
Let $S=A_n$ be an alternating group, with $n\geq 5$.
If $n\ne 6$, or $n=6$ and $G=S_6$, take $s$ to be an $n$-cycle if $n$ is odd,
or an $(n-1)$-cycle if $n$ is even, and take $K=S$.
In both cases $\vert s\vert$ is odd and $C_{S_n}(s)\leq A_n$. 
If $n=6$ and $G$ is a cyclic extension of $A_6$ distinct from $S_6$, 
we may always take $s$ to be a $3$-cycle (see \cite{Atlas}).

If $S$ is a sporadic group, then $\Out{S}$ is always cyclic of order at most two. 
The following table lists possible choices for the order of $s$, depending on the 
pair $(G,S)$ when $G\ne S$ (our reference is \cite{Atlas}). Then $s$ can be chosen to 
be any element of the given order.
\begin{center}
\begin{tabular}{|l|c c c c c c c c c c c c|}
\hline  $S\, $  &  $M_{12}$ & $M_{22} $ & $J_2$ &      $HS$ &  $J_3$  &  $McL$     & 
                $He$     & $Suz$     & $O'N$ & $Fi_{22}$ &  $HN$   &  $Fi_{24}'$ \\ 
\hline  
$\vert s\vert \, $ & $11$   &  $11$     &  $5$  &   $11$   &   $19$  &   $7$      & 
                  $17$   &  $13$     & $31$  &   $13$   &  $19$   &   $29$\\    
\hline 
\end{tabular} 
\end{center}


We assume now that $G$ is a cyclic extension of a finite simple group $S$ of 
Lie type in characteristic $p$ and that the pair $(G,S)$ satisfies the conclusions of 
Proposition \ref{List A}. 
Let $\delta$ be a diagonal automorphism of $S$ of maximal order $d$, modulo $S$, and
set $\widehat{S}=S\langle \delta\rangle$. Let $\varphi$ be a field automorphism of 
$\widehat{S}$ of order $f$, where $q=p^f$ except when $S$ is unitary, when $q^2=p^f$. 
Set $S^*=\widehat{S}\langle \varphi\rangle$. Then 
$$S\leq \widehat{S} \leq S^*\leq \Aut{S},$$
where the indices are respectively $d$, $f$ and $g$, where $g\in\{1,2\}$, for 
the groups under consideration.
We treat separately the following three cases: a) $G\leq \widehat{S}$, b) $G\leq S^*\setminus 
\widehat{S}$ and c) $G\not\leq S^*$.\\

\noindent
a) Assume $G\leq \widehat{S}$.\\
According to Proposition \ref{List A}, $S\in \{PSL(2,q), \, PSU(3,q)\}$ and 
$G=\widehat{S}$ with the index $d$ of $S$ in $G$ being respectively $2$ or $3$. 
In particular, $p$ is coprime with
$\vert G/S\vert$. Let $s$ be a regular unipotent element of $S$. Since by 
\cite[Lemma 3.1]{White} (respectively by \cite[Table 2]{SimpsonFrame}) we have that 
$C_G(s)<S$, taking $K=S$ we have that $G\ne KC_G(s)$, as we wanted.\\

\noindent
b) Assume $G\leq S^*\setminus \widehat{S}$.\\
According to Proposition \ref{List A}, $S$ is one of the following groups: 
$$PSL(2,q), \, PSU(3,q), \, \null^2 B_2(q),\, \null^2 G_2(q).$$
Note that in the last two cases $q$ is respectively $2^{f}$ or $3^{f}$, with $f$ odd 
and $f\geq 3$ (since $\null^2 B_2(2)$ and $\null^2 G_2(3)$ are not simple groups).
Moreover, as $G\not\leq \widehat{S}$, we always have $f>1$ in this case.

Let $\overline{\mathbb{F}}_p$ be the algebraic closure of the 
field $\mathbb{F}_p$ of order $p$. We first claim that for any of the aforementioned 
simple groups $S$  there is a least integer $m$ (whose values are displayed in 
Table A) and an embedding 
$$\iota :\widehat{S}\lto PGL(m, \overline{\mathbb{F}}_p),$$
such that $\varphi$ is the restriction to $S$ of the standard Frobenius automorphism 
of $PGL(m, \overline{\mathbb{F}}_p)$, which later we will still call $\varphi$.
This claim is trivial when $S=PSL(2,q)$ or $S=PSU(3,q)$, respectively, when 
$m=2$ or $3$ 
 and $\iota$ is the natural inclusion. For the case $S=\null^2 B_2(2^{f})$, 
note that $S$ is the centralizer in $S_0=PSp(4,2^{f})$ of a graph involution $x$ 
(\cite[Proposition 2.4.4]{KleidmanLiebeck}) and that 
$$\Aut{S_0}=\Inn{S_0}:(\langle\varphi\rangle \times \langle x\rangle)\simeq S_0:
(C_{f}\times C_2).$$
Since the existence of an embedding $\iota$ with the aforementioned property, 
of $S_0$ into $PGL(4, \overline{\mathbb{F}}_2)$ is guaranteed, the same is true for 
$\null^2 B_2(2^{f})$. Similarly, $S=\null^2 G_2(3^{f})$ is 
the centralizer in $S_0=P\Omega^+(8,3^{f})$ of the full group of graph automorphisms 
of $S_0$
(see \cite{Kleidman}). As
$$\Aut{S_0}=\Inn{S_0}:(\langle\varphi\rangle \times \langle x,y\rangle)\simeq S_0:
(C_{f}\times 
S_3),$$
for suitable graph automorphisms $x$ and $y$, 
and such an embedding $\iota$ exists for $P\Omega^+(8,3^{f})$ into 
$PGL(8, \overline{\mathbb{F}}_3)$, the same is true for $\null^2 G_2(3^{f})$ and our 
claim is proved.

Now we assume that there exists a primitive prime divisor of $p^{fz}-1$, with $z$ as 
in Table A, and let $r$ be such a prime divisor.  Note that $r$ divides the 
order of $S$. Also, trivially, $r\ne d$, and, if $r$ divides $f$, then, writing 
$f=rf'$, we have $0\equiv p^{fz}-1\equiv p^{f'z}-1$ (mod $r$), which contradicts the 
fact that $r$ is a primitive prime divisor of $p^{fz}-1$. Therefore $r$ is coprime 
with $\vert G/S\vert $. Let $t_1$ be an element of $S$ of order $r$.  Note that $t_1$ 
is a power  of a generator of a cyclic maximal torus $T$ of $S$, whose order is 
displayed in Table A.
Suppose that $C_G(t_1)$ contains an element of the form $g\delta^h\varphi^{k}$, 
with $g\in S$, and $0\leq h\leq d-1$, $0< k \leq f-1$. Then 
$$t_1^{g\delta^h}=t_1^{\varphi^{-k}}.$$
This, of course, implies that
$$(\iota (t_1^{g\delta^h}))^L=(\iota (t_1)^{\varphi^{-k}})^L,$$
where $L=PGL(m,\overline{\mathbb{F}}_p)$ and $(y)^L$ denotes the $L$-conjugacy class 
of $y\in L$. Now $\iota (t_1)$ is $L$-conjugate to the projection $\bar\alpha$ of a 
diagonal $m\times m$ matrix $\alpha$, and $\varphi$ sends $\bar\alpha$ to its 
$p$-th power $\bar\alpha^p$.
As $\iota(g\delta^h)\in L$, it follows that
$$(\iota (t_1^{g\delta^h}))^L=(\iota (t_1))^L=(\bar\alpha)^L=(\bar\alpha^{p^{-k}})^L.
$$
We want to prove that if $0<k<f$ the two $L$-classes $(\bar\alpha)^L$ and 
$(\bar\alpha^{p^{-k}})^L$ are different.
Note that as $t_1$ has order $r$ and the matrix $\alpha\in SL(m,
\overline{\mathbb{F}}_p)$ is determined modulo the scalars, we can choose $\alpha$ in 
such a way that its eigenvalues are either $1$ or  have order $r$ in the 
multiplicative group of $\overline{\mathbb{F}}_p$. Moreover, $\alpha$ has at least 
one eigenvalue $\mu$ of order $r$. Note that $\mu$ belongs to the field 
$\mathbb{F}_{p^{fz}}$, but to no smaller field. Since $\iota (t_1)\in PSL(m,p^f)$, 
the characteristic polynomial $\chi$ of $\alpha$ has coefficients in 
$\mathbb{F}_{p^f}$, so $\mu, \mu^{p^f}, \dots,\mu^{p^{f(z-1)}}$ are all distinct 
roots of $\chi$. If $S\ne \null^2G_2(q)$ then $\chi$ has degree $m=z$ and the 
eigenvalues of $\alpha$ are precisely $\mu, \mu^{p^f}, \dots, \mu^{p^{f(z-1)}}$. 
If $S=\null^2G_2(q)$, then $m=6$, $z=8$, and $\chi$ factors as $\chi=\chi_1\chi_2$, 
where $\chi_1$ is the minimum polynomial of $\mu$ and has degree $6$, and $\chi_2$ 
has degree $2$. Now the roots of $\chi_2$, which are eigenvalues of $\alpha$, cannot 
have order $r$, because $r\nmid q^2-1$, so they must be $1$, and the eigenvalues of 
$\alpha$ are: $\mu, \mu^q,\dots,\mu^{q^{5}}, 1, 1$.
The non-zero entries of the matrix $\alpha^{p^{-k}}$ are the $p^{-k}$-th powers of 
the eigenvalues of $\alpha$ and it is straightforward to see that 
if $0<k<f$  it cannot happen that $\alpha=\lambda\alpha^{p^{-k}}$ for some 
$\lambda\in \overline{\mathbb{F}}_p$. This proves that $C_G(t_1)\leq G\cap 
\widehat{S}$. Thus, by taking $s=t_1$ and $K$ any maximal subgroup containing 
$\widehat{S}\cap G$, we have that $G\ne KC_G(s)$, as we wanted.

We consider now the cases in which no primitive prime divisor of 
$p^{fz}-1$ exists.
Then by Zsigmondy's Theorem (see \cite{Zsig1892}), either $(p,zf)=(2,6)$ 
or $p$ is a Mersenne prime and $zf=2$. The last condition 
cannot happen, since  in this case both $z$ and $f$ are greater than one. 
The first condition reduces to considering the 
cases when $S$ is either $PSL(2,8)$ or $PSU(3,2)$. But $PSU(3,2)$
is not simple, while if $S=PSL(2,8)$, then $G=\Aut{S}=S^*$, and 
we can take $s$ to be an element of $S$ of order $7$ and $K=S$ (see \cite{Atlas}). 

\begin{center}
Table A\\
\begin{tabular}{|l|l|l|l|l|l|}
\hline $S$  & $\vert T\vert $, $T$ a max. torus of $S$ & $d$ & 
$\vert \Out{S}\vert $ & $z$ & $m$\\ 
\hline  
$PSL(2,q)$     & $(q+1)/d $      & $(q-1,2)$ & $df$ & 2 & 2   \\
$PSU(3,q)$     & $(q^2-q+1)/d$   & $(q+1,3)$ & $df$ & 3 & 3 \\
$\null^2 B_2(q)$   & $\left\{ \begin{array}{ll}
q+\sqrt{2q}+1 & \textrm{or}\\
q-\sqrt{2q}+1 & \textrm{} 
\end{array} \right. $  & 1 & $f$ & 4 & 4\\
$\null^2 G_2(q)$   & $\left\{ \begin{array}{ll}
q+\sqrt{3q}+1 & \textrm{or}\\
q-\sqrt{3q}+1 & \textrm{} 
\end{array} \right. $  & 1 & $f$ & 6 & 8\\
\hline 
\end{tabular} 
\end{center}

\noindent 
c) Assume $G\not \leq S^*$. \\
Then, according to Proposition \ref{List A}, $S$ is one of the following groups: 
$$PSL(3,q),\, PSp(4,2^f),\, G_2(3^f),$$ 
with $f$ an integer greater than 1, and $\vert G:G^*\vert =2$, where $G^*=G\cap S^*$.
We choose $s$ to be a generator of  a cyclic maximal torus $T$ of $S$, whose order is 
respectively $(q^2+q+1)/d$, $q^2+1$, or $q^2-q+1$, according to whether $S$ is 
$PSL(3,q)$, $PSp(4,2^f)$ or $G_2(3^f)$, and $K=G^*$. 
Note that $\vert s\vert = \vert T\vert $ is odd, and thus coprime with 
$\vert G/K\vert $. 
We first claim that $\vert C_{G^*}(T)\vert $ is odd. 
If not, let $y$ be an involution in $C_{G^*}(T)$. 
Since $\vert C_{\widehat{S}}(T)\vert = d\vert T\vert$ is odd, $y\not\in \widehat{S}$. 
By Proposition 4.9.1 in \cite{GLS3}, we have that $f$ is even and $y$ is 
$\widehat{S}$-conjugate to a field automorphism of order two. 
In particular, $C_S(y)$ is isomorphic respectively to 
$PSL(3,p^{f/2})$, $PSp(4,2^{f/2})$ or $G_2(3^{f/2})$. 
In each of these cases, by order reasons, $C_S(y)$ cannot contain $T$. Thus 
$\vert C_{G^*}(T)\vert $ is odd, and if we argue by contradiction assuming 
$G=G^*C_G(s)$, 
there exists some involution $x$ in $C_G(s)\setminus G^*$. 
Again by Proposition 4.9.1 in \cite{GLS3}, we have that $C_G(x)$ is isomorphic 
respectively to $PSU(3,q)$, $\null^2 B_2(q)$ or $\null^2 G_2(q)$. 
But none of these groups contains a cyclic maximal torus $T$ of $S$, a contradiction.
\end{proof}

The following proposition eliminates the possibility that a minimal counterexample to 
Theorem \ref{goal} can be an almost simple group but not simple.
\begin{pro}\label{reduction_to_simple}
Let $G$ be an almost simple group which is not simple. Then $G$ does not admit a 
nilpotent minimal covering.
\end{pro}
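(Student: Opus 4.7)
The plan is to argue by contradiction using Lemma~\ref{centralizers} together with the basic observation that distinct members of a minimal covering generate $G$. Suppose $G$ is almost simple with socle $S\ne G$ and admits a nilpotent minimal covering $\mathcal{A}=\{A_1,\dots,A_\sigma\}$. Lemma~\ref{reduction_to_cyclic} (whose proof only uses the existence of a nilpotent minimal covering on $G$) yields that $G/S$ is cyclic; if $S$ is of Lie type, Lemma~\ref{prop_uno}(b) together with Proposition~\ref{List A} forces $(G,S)$ to lie in the list that appears in the hypotheses of Lemma~\ref{centralizers}. Applying that lemma produces an element $s\in S$ and a maximal subgroup $K$ of $G$ containing $S$ such that $\gcd(\vert s\vert,[G:K])=1$ and $G\ne KC_G(s)$.

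Next I would extract two structural consequences of this data. Since $G/S$ is cyclic (hence abelian), every overgroup of $S$ in $G$ is normal, so $K\trianglelefteq G$, and maximality forces $[G:K]$ to be a prime, say $r$. Thus $KC_G(s)$ is a subgroup of $G$ containing $K$, so by maximality it equals $K$ or $G$; the hypothesis rules out $G$, giving $C_G(s)\le K$. The key claim I would then establish is that every nilpotent subgroup $A\le G$ containing some $G$-conjugate $s^g$ is contained in $K$. Indeed, letting $\pi$ be the set of primes dividing $\vert s\vert$ and writing the Hall decomposition $A=A_\pi\times A_{\pi'}$, the factor $A_{\pi'}$ centralizes $s^g$ and so lies in $C_G(s)^g\le K^g=K$, while $A_\pi$ has $\pi$-order with $r\notin\pi$, so its image in the cyclic group $G/K$ of order $r$ is trivial and $A_\pi\le K$.

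To finish, I would fix an $A_i$ containing $s$. If every $G$-conjugate of $s$ lay in $A_i$, then the non-trivial normal subgroup $\langle s^G\rangle$ of $G$ would be contained in $A_i\cap S$, and by simplicity of $S$ this forces $\langle s^G\rangle=S\le A_i$, contradicting that $A_i$ is nilpotent while $S$ is non-abelian simple. Hence some conjugate $s^g$ lies in a distinct member $A_j\ne A_i$ of $\mathcal{A}$; but the key claim places both $A_i$ and $A_j$ inside $K$, so
\[
G=\langle A_i,A_j\rangle\le K<G,
\]
a contradiction. All the delicate arithmetic has been absorbed into Lemma~\ref{centralizers}; the step I expect to require the most care is the Hall-decomposition argument driving any nilpotent subgroup meeting $s^G$ into $K$, and in particular ensuring both that $K$ is normal in $G$ of prime index (so that $KC_G(s)$ is genuinely a subgroup) and that $r\notin\pi$.
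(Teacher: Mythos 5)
Your proposal is correct and follows essentially the same route as the paper: reduce via Lemmas~\ref{reduction_to_cyclic} and \ref{centralizers}, show that any nilpotent member of the covering containing a conjugate of $s$ must lie in $K$ (using that coprime-order parts of a nilpotent group commute and that $[G:K]=r$ is prime and coprime to $\vert s\vert$), and then contradict $\langle A_i,A_j\rangle=G$. Your explicit use of the normality of $K$ and the Hall decomposition $A=A_\pi\times A_{\pi'}$ is just a cleaner packaging of the paper's argument with the element $\alpha^v$.
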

\begin{proof}
Suppose that $G$ has a nilpotent minimal covering 
$$G=A_1\cup\ldots\cup A_{\sigma},$$
with $\sigma =\sigma(G)$, and all $A_i$ nilpotent. Suppose further that 
$S<G\leq \Aut{S}$, where $S$ is a finite non abelian simple group. 
If $S$ is of Lie type, then by Lemma 
\ref{prop_uno} the pair $(G,S)$ is one that appears in Proposition 
\ref{List A}. 
Also by Lemma \ref{reduction_to_cyclic}, we can assume that $G/S$ is cyclic. 
According to Lemma \ref{centralizers}, we may choose an element $s$ in $S$ and a 
maximal subgroup $K$ of $G$ containing $S$ such that $\vert s\vert $ is coprime with 
the prime $r=\vert G/K\vert$ and $G\ne KC_G(s)$. 
Note that $r$ is prime since $G/S$ is cyclic.
Let $s\in A_i$ for some $i \in \{1, \ldots, \sigma\}$. We claim that $A_i$ lies in $K$. 
For if not let $\alpha\in A_i\setminus K$, so that $G=K\langle \alpha\rangle$ by the 
maximality of $K$. If $\vert\alpha\vert=r^av$, with $r$ not dividing $v$, we also 
have that $G=K\langle\alpha^v\rangle$, and, moreover, that $\alpha^v$ is an element 
of $A_i$ of order $r^a$, which is coprime with $\vert s\vert$. Thus, since $A_i$ is 
nilpotent, $\alpha^v\in C_G(s)$, forcing $G$ to be equal to $KC_G(s)$, contradicting 
the choices of $s$ and $K$. 
%
%
Thus $A_i\leq K$. We may choose some $g\in S$ such that $s^g\not\in A_i$.
Such a $g\in S$ exists, for otherwise by the simplicity of $S$ we would have that 
$\langle s^g\vert g\in S\rangle=S\leq A_i$, contradicting the nilpotence of $A_i$. 
Suppose that $s^g\in A_j$. Then, arguing as before, $A_j\leq K$, and so we conclude 
that $G=\langle A_i,A_j\rangle\leq K$, a contradiction. 
\end{proof}
%
%
We are now in a position to complete the proof of Theorem \ref{goal}. 
\begin{pro}\label{end_simple}
No finite simple group $S$ admits a nilpotent minimal covering.
\end{pro}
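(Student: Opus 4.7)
My plan for Proposition \ref{end_simple} is the following. By Proposition \ref{reduction_to_simple}, a minimal counterexample $G$ to Theorem \ref{goal} is necessarily itself simple, so $G=S$. Taking $G=S$ in Proposition \ref{List A} forces $S$ into case (a), so $S\in\{PSL(2,q),\, PSU(3,q),\, \null^2 B_2(q),\, \null^2 G_2(q)\}$. Bryce and Serena already eliminated $PSL(2,q)$ and the Suzuki groups $\null^2 B_2(q)$ in \cite{BS2008}, so the entire burden of the proposition is to rule out $PSU(3,q)$ and $\null^2 G_2(q)$.

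For each of these two families I would first pin down the members of a hypothetical nilpotent minimal covering $\mathcal{A}=\{A_1,\ldots,A_\sigma\}$ of $S$ that meet the unipotent set of $S$. Let $B=N_S(U)=U\rtimes T$ be a Borel subgroup, with $T$ a cyclic complement of order $(q^2-1)/(3,q+1)$ or $q-1$ respectively, acting faithfully on $U$. The proof of Lemma \ref{prop_uno} shows that each $A_i$ containing a regular unipotent lies in a unique Borel of $S$, and distinct such $A_i$ lie in distinct Borels. Since by Proposition \ref{r.u.e.}(c) the regular unipotents of $U$ are exactly the elements with nonzero coordinates along all fundamental root subgroups and hence generate $U$, and since all regular unipotents of $U$ must lie in the unique $A_i\leq B$, we obtain $U\leq A_i$. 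The nilpotence of $A_i$ together with $C_T(U)=1$ then collapses $A_i$ to $U$ itself. Consequently, the members of $\mathcal{A}$ that touch the unipotent variety of $S$ are precisely the $n_p(S)$ Sylow $p$-subgroups of $S$, leaving $\sigma-n_p(S)$ further members to cover every nontrivial semisimple element of $S$.

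Each remaining member is then a nilpotent subgroup of the normalizer of some maximal torus of $S$. The plan is to combine the explicit classification of maximal tori in $PSU(3,q)$ and $\null^2 G_2(q)$ with the action of the corresponding Weyl groups to produce a sharp lower bound on the number of nilpotent subgroups required to cover all semisimple elements, and to contrast it with an explicit construction of a nilpotent cover to obtain a numerical contradiction with the minimality of $\sigma$. The main obstacle is precisely this last counting step: for the anisotropic tori, of orders $(q^2-q+1)/d$ in $PSU(3,q)$ and $q\pm\sqrt{3q}+1$ in $\null^2 G_2(q)$, the Weyl group acts as a cyclic permutation on generators, so a single nilpotent subgroup captures only a $1/|W|$ fraction of the $S$-conjugacy class of any such element. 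Making the bookkeeping sharp enough to contradict $\sigma$ for all admissible $q$, and in particular handling the small values of $q$ (such as $q=3$ for $\null^2 G_2(q)$) where the generic estimates degrade and an ATLAS-based or computer-algebra argument may be unavoidable, is where I expect the real technical work to lie.
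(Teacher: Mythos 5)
Your reduction to the four families $PSL(2,q)$, $PSU(3,q)$, $\null^2 B_2(q)$, $\null^2 G_2(q)$ and the appeal to \cite{BS2008} for the first and third of these match the paper exactly, and your analysis of the members containing regular unipotent elements is essentially sound (the paper gets the same conclusion more directly: $C_S(u)$ is a $p$-group, so the unique maximal nilpotent subgroup containing a regular unipotent $u$ is a Sylow $p$-subgroup). But the heart of the proposition --- actually deriving a contradiction for $PSU(3,q)$ and $\null^2 G_2(q)$ --- is missing. You replace it with a counting program (lower-bound the number of nilpotent subgroups needed to cover the semisimple elements, then beat it with an explicit cover) that you do not carry out and that you yourself identify as the real difficulty. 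That is precisely the gap: nothing in the proposal produces the contradiction. There are also two smaller overreaches along the way: only members containing \emph{regular} unipotents are forced to be Sylow $p$-subgroups (members may still contain non-regular unipotent or mixed elements, so "the members that touch the unipotent variety are precisely the Sylow $p$-subgroups" and "the remaining members need only cover the semisimple elements" are both too strong), and the claim that every remaining member lies in the normalizer of a maximal torus is asserted without justification.

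The paper avoids counting entirely and uses the $\langle A_i,A_j\rangle=S$ principle with two well-chosen elements. If an odd prime $r$ divides $q-1$ (always the case for $\null^2 G_2(q)$), a Sylow $r$-subgroup $R=\langle x\rangle$ is cyclic and satisfies $C_S(x)=H$, the Levi complement of a Borel $B=UH$; hence the member containing $x$ lies in $H$, the member containing a regular unipotent $u\in U$ lies in $U$, and both lie in $B$, a contradiction. The only leftover case, $PSU(3,q)$ with $q-1$ a power of $2$, is handled by the same two-element trick inside the involution centralizer $C_S(w)$, where $w$ is the involution of the cyclic Levi $H$: the member containing a generator of $H$ is shown to be either $H$ itself (absorbed into $N_S(U)$, contradicting minimality) or a specific nilpotent subgroup of $C_S(w)$, and comparing it with the member containing a suitable conjugate $h^g$, $g\in C_S(w)\setminus N_S(H)$, forces $S=\langle A_i,A_j\rangle\leq C_S(w)$. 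You would need to supply an argument of comparable force to close your version; as written, the proposal establishes only the preliminary reductions.
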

\begin{proof}
In \cite{BS2008} the cases $S$ alternating and sporadic are completely settled.  
We can assume therefore that $S$ is a finite simple group of Lie type in 
characteristic $p$. By Lemma \ref{prop_uno} and Proposition \ref{List A}, 
$S$ lies in one of the following families:
$$ PSL(2,q), PSU(3,q), \null^2 B_2(q), \null^2 G_2(q).$$
The two families of projective special linear groups $PSL(2,q)$ and the Suzuki groups
$\null^2 B_2(q)$ have also been settled 
in \cite{BS2008} 
(respectively Lemma 4.2 and Theorem 4.3). We need only to analyze the 
two remaining families. 

Let therefore $S=PSU(3,q)$, with $q>2$, or $S=\null^2 G_2(q)$, with $q=3^f$, where 
$f$ is odd and $f\geq 3$. Note that 
$\vert PSU(3,q)\vert = \frac{1}{d}\cdot q^3(q^2-1)(q^3+1)$, where $d=(3,q+1)$, and 
$\vert \null^2 G_2(q)\vert =q^3(q-1)(q^3+1)$. Assume first that there is an odd prime 
$r$ dividing $q-1$ (observe that this is always the case when $S=\null^2 G_2(q)$). 
Since $q\equiv 1$ (mod $r$), we have that $r$ is coprime with $\vert S\vert/(q-1)$. 
Thus if $R$ is a Sylow $r$-subgroup of $S$, $R$ lies in a Levi complement $H$ of a 
Borel subgroup $B=UH$. In particular, $R$ is cyclic, $R=\langle x\rangle$, 
and $H = C_S(R)$, by \cite[Table 2]{SimpsonFrame} for $PSU(3,q)$, and 
\cite{Ward-Ree}, or \cite[Lemma 2.2]{Lucido2007} for $\null^2 G_2(q)$. If $x\in 
A_i$, then, since $A_i$ is nilpotent and the Sylow $r$-subgroups are cyclic, 
$A_i\le C_S(x) = H$.  Now, let $u\in U$ be a regular unipotent element of $S$. Then 
$C_S(u)$ is a $p$-subgroup (again by \cite[Table 2]{SimpsonFrame} and 
\cite{Ward-Ree}). In particular if $u\in A_j$, then $A_j\le U$.  But then we get a 
contradiction, since $\langle A_i,A_j \rangle \leq B$.

It remains to consider the case when $S=PSU(3,q)$ and $q-1$ is a power of $2$. 
Note that this happens if and only if $q=9$ or $q$ is a Fermat prime, say $q=2^m+1$. 
Let $\mathcal{A}=\{A_i\}_{i=1}^{\sigma}$ be a nilpotent minimal covering of \
$S=PSU(3,q)$. 
The centralizer of any regular unipotent element $u$ of $S$ is a $p$-subgroup 
(\cite[Table 2]{SimpsonFrame}), and therefore there exists a unique maximal 
nilpotent subgroup of $S$ containing $u$, and this subgroup is a Sylow $p$-subgroup 
of $S$. We may therefore assume that all the Sylow $p$-subgroups of $S$ appear 
as members of the nilpotent covering $\mathcal{A}$. Now let $U$ be a Sylow 
$p$-subgroup and 
let $H$ be a Levi complement of it in a Borel subgroup $N_S(U)=UH$. In 
particular, $H$ is cyclic of order $(q^2-1)/d$. Let $h$ be a generating element of 
$H$ and assume that $h\in A_{i}$ for some $i\in\{1,\ldots ,\sigma\}$. 
If $A_i=H$ then we can replace the subgroups $U$ and $A_i$ of $\mathcal{A}$ 
with the subgroup $N_G(U)$, obtaining a covering of $S$ with fewer than $\sigma$ 
members, which contradicts the minimality of $\sigma$. 
Therefore $A_i$ must be a nilpotent subgroup of $S$ that strictly contains $H$. 
Now the Sylow $2$-subgroup of $H$ is cyclic (of order $16$ if $q=9$ and $2^{m+1}$ if 
$q=2^m+1$) and so it is normal of index $2$ in a Sylow $2$-subgroup of $S$. In 
particular, the involution $w$ of $H$ is a central element of $A_i$, that is, 
$A_i\leq C_S(w)$.  By Proposition~4~(iii) in \cite[Chapter II, Section 2]
{AlperinBrauerGorenstein}, $C_S(w)$ is a central extension of a 
cyclic group of order $\frac{q+1}{d}$ by a group isomorphic to $PGL(2,q)$. 
Note that $H^{q-1}$ is the central subgroup of $C_S(w)$ of order $(q+1)/d$. 
Now the only nilpotent subgroups of $C_S(w)$ that strictly contain a cyclic subgroup 
of order $(q^2-1)/d$ are central extensions of $C_{\frac{q+1}{d}}$ by a Sylow 
$2$-subgroup, say $P$, of $PGL(2,q)$ (and so also of $S$). 
Therefore $A_i$ is a group isomorphic to $C_{\frac{q+1}{2d}}\times P$, 
and it contains $H$ as a 
subgroup of index two. Since $H$ is not normal in $C_S(w)$ we can find an element 
$g\in C_S(w)\setminus N_S(H)$ and consider the element $h^g$. Assume that 
$h^g\in A_j$. 
Arguing as before, we have that either $A_j=\langle h^g\rangle=H^g$, or $A_j$ is a 
subgroup of $C_S(w)^g=C_S(w)$ isomorphic to $A_i$. In the first case we obtain a 
contradiction by replacing the subgroups $U^g$ and $A_j$ in $\mathcal{A}$ with 
$(UH)^g$. In the latter case we have that $A_j\ne A_i$, since $H\ne H^g$ and a 
group isomorphic to $A_i$ has a unique cyclic maximal subgroup of index two. But 
then $G=\langle A_i, A_j \rangle \leq C_S(w)$, a contradiction.
\end{proof}

\section{Acknowledgments} 
We are very grateful to Ron Solomon for his precious help 
in dealing with the groups of Lie type and for the constant encouragement he gave us. 
The first-listed author is thankful for the generous hospitality of the Dipartimento 
di Matematica e Informatica of the Universit\`a di Firenze during his visit there, 
when this work was undertaken. The third-listed author is partially supported by the 
``National Group for Algebraic and Geometric Structures, and their Applications'' 
(GNSAGA - INDAM).

\end{document}